\newcommand{\st}{\;|\;}
\newcommand{\ts}{\times}
\newcommand{\op}{\oplus}
\newcommand{\ot}{\otimes}
\newcommand{\Z}{{\mathbb{Z}}}
\newcommand{\Q}{{\mathbb{Q}}}
\newcommand{\R}{{\mathbb{R}}}
\newcommand{\C}{{\mathbb{C}}}
\newcommand{\cC}{\mathcal{C}}
\newcommand{\cL}{\mathcal{L}}
\newcommand{\fso}{\mathfrak{so}}
\newcommand{\fgdiff}{\mathfrak{gdiff}}
\newcommand{\al}{\alpha}
\newcommand{\be}{\beta}
\newcommand{\lam}{\lambda}
\renewcommand{\phi}{\varphi}
\newcommand{\la}{\langle}
\newcommand{\ra}{\rangle}
\newcommand{\Diff}{\mathrm{Diff}}
\newcommand{\GDiff}{\mathrm{GDiff}}
\newcommand{\SU}{\mathrm{SU}}
\newcommand{\SO}{\mathrm{SO}}
\newcommand{\SSS}{\mathrm{S}}
\newcommand{\Spin}{\mathrm{Spin}}
\DeclareMathOperator{\rk}{rk}
\DeclareMathOperator{\id}{id}
\DeclareMathOperator{\End}{End}
\DeclareMathOperator{\Aut}{Aut}
\DeclareMathOperator{\Ann}{Ann}
\theoremstyle{theorem}
\newtheorem{theorem}{Theorem}[section]
\newtheorem{lemma}[theorem]{Lemma}
\newtheorem{proposition}[theorem]{Proposition}
\theoremstyle{definition}
\newtheorem{definition}[theorem]{Definition}
\theoremstyle{remark}
\newtheorem{remark}[theorem]{Remark}
\title{$B_n$-generalized geometry\\ and $G_2^2$-structures}   
\author{Roberto Rubio \thanks{Mathematical Institute, 24-29 St Giles, Oxford OX1 3LB, UK, rubio@maths.ox.ac.uk}}
\date{today}
\begin{document}

\maketitle

\begin{abstract}
  We introduce the concept of $G_2^2$-structure on an orientable
  $3$-manifold $M$ using the setting of generalized geometry of type
  $B_n$, study their local deformation by making use of a Moser-type
  argument, and give a description of the cone of $G_2^2$-structures.
\end{abstract}


\section{Introduction}
\label{sec:introduction}

Generalized geometry was originally introduced in
\cite{hitchin-CY:2003} as, naively, the differential geometry
resulting from replacing the tangent bundle $T$ of a manifold $M$ with
the sum of the tangent and cotangent bundles, $T\op T^*$, which is
naturally endowed with an $\SO(n,n)$-structure. Classical concepts have
then generalized analogues, such as generalized metrics and
generalized Calabi-Yau or generalized complex structures. An
interesting feature of this geometry is that the bundle of
differential forms $\bigwedge^\bullet T^* M$ becomes a bundle of
spinors, in which some of these structures are formulated. For
instance, a generalized Calabi-Yau structure is given by a closed
section of $\bigwedge^{ev} T^* M\otimes \C$ or $\bigwedge^{od} T^*
M\otimes \C$ consisting of pure spinors.

The generalized tangent space $T\op T^*$ can be further modified by
adding new pieces. The simplest one is the rank $1$ trivial bundle
over $M$, which we denote by $1$. Since the natural metric of $T\op
T^*\op 1$ has signature $(n+1,n)$, the group of symmetries becomes
$\SO(n+1,n)$. As this group is of Lie type $B_n$, we call this
geometry generalized geometry of type $B_n$, from now on
$B_n$-geometry. Correspondingly, ordinary generalized geometry is
called $D_n$-geometry. Exceptional geometries based on the split real
forms $E^n_n$ have also been studied as, for example, in
\cite{hull:2007}.

$B_n$-geometry was originally introduced by Baraglia in
\cite{baraglia:2011} (Section 2.4). It also arises as a particular
case of the more general situation studied in
\cite{chen-stienon-xu:2009}. Section \ref{sec:Bn-gengeo} of the
present work is devoted to stating the basic features of
$B_n$-geometry.

In Section \ref{sec:G22-str}, we introduce $G_2^2$-structures on an
orientable $3$-manifold $M$ as suggested by
Baraglia. $G_2^2$-structures are defined by analogy with generalized
Calabi-Yau structures. They are given by a closed section $\rho$ of
$\bigwedge^\bullet T^*$ consisting of non-pure spinors. We consider the existence and equivalence of $G_2^2$-structures on compact orientable $3$-manifolds. While
$G_2^2$-structures with non-vanishing degree $0$ component,
$\rho_0\neq 0$, exist on any $3$-manifold, those with $\rho_0 = 0$ only
exist on orientable mapping tori. In fact any mapping torus of an
orientable surface can be endowed with such a $G_2^2$-structure
(Theorem \ref{theo:map-tor-twisted-G22-str}). In Section
\ref{subsec:deformation-G22} we show that the Moser argument in
symplectic geometry can be modified to obtain the result that a small
deformation within the cohomology class does not change the structure
up to generalized diffeomorphism (Theorem
\ref{theo:small-perturbation}). We finish by describing the cone of
$G_2^2$-structures inside $H^\bullet(M)$ in
Theorem~\ref{theo:G22-cone}. \bigskip

The author wishes to thank his supervisor Nigel Hitchin for
introducing him to this subject and for his constant support and
generosity. This work was been possible thanks to a Fellowship for
Graduate Courses in Universities and Colleges funded by Fundaci\'on
Caja Madrid.

\section{$B_n$-generalized geometry}
\label{sec:Bn-gengeo}

\subsection{The Courant algebroid $T\op T^*\op 1$}
\label{subsec:group-gen-diff}

Let $M$ be a differentiable manifold of dimension $n$ with tangent
bundle $T$ and cotangent bundle $T^*$. Let $1$ denote the trivial
bundle of rank $1$ over $M$. Define the $B_n$-generalized tangent
bundle by $T\op T^*\op 1$. The sections of this bundle are called
generalized vector fields and are naturally endowed with a signature
$(n+1,n)$ inner product given~by
$$(X+\xi+\lambda,Y+\eta+\mu)=\frac{1}{2}(i_X\eta + i_Y\xi) + \lambda\mu,$$
where $X+\xi+\lambda, Y+\eta+\mu \in \cC^\infty(T\op T^*\op 1)$.
Together with the canonical orientation on $T\op T^*\op 1$, this
endows $T\op T^*\op 1$ with the structure of an
$\SO(n+1,n)$-bundle. We introduce a Courant bracket on
$\cC^\infty(T\op T^*\op 1)$ via
\begin{equation*}
  \begin{split}
    [X+\xi+\lam,Y+\eta+\mu] = {} & [X,Y]+\cL_X\eta-\cL_Y\xi
    -\frac{1}{2}d(i_X \eta-i_Y \xi)\\ & + \mu d\lam-\lam d\mu +(i_X
    d\mu - i_Y d\lam),
  \end{split}
\end{equation*}
so that $(T\op T^*\op 1, (,), [,])$ is a Courant algebroid in the
sense of \cite{liu-weinstein-xu:1997}.

The infinitesimal orthogonal transformations of $T\op T^*\op 1$ are
given by the elements
$$\left(
\begin{array}{ccc}
  E & \beta & -2\alpha \\
  B & -E^t & -2A \\
  A & \alpha & 0 
\end{array}
\right) \in \cC^\infty(\fso(T\op T^*\op 1))$$ such that
$E\in \End(T)$, $\beta\in \bigwedge^2 T$, $B\in \bigwedge^2 T^*$, the
$B$-field already present in $D_n$-geometry, $\alpha \in T$ and $A\in
T^*$, the $A$-field which will be relevant in $B_n$-geometry. The
exponentiation of a $B+A$-field gives the element
$$(B,A):=\exp(B+A)=\left(
\begin{array}{ccc}
  1 &  &  \\
  B-A\ot A & 1 & -2A \\
  A &  & 1 
\end{array}
\right)\in \cC^\infty(\SO(T\op T^*\op 1)),$$ which acts by
$(B,A)(X+\xi+\lambda) = X+\xi+i_X B-2\lambda A-i_X A\, A+\lambda +i_X
A.$

The composition law of these elements in $\cC^\infty(\SO(T\op T^*\op
1))$ is
$$(B,A)(B',A')= (B+B'+ A\wedge A',A+A').$$
Note that $A$-fields do not commute and their product involves a
$2$-form.

Their action on the Courant bracket is given by the following result.
\begin{proposition}\label{prop:e(B,a)-courant}
  Let $(B,A)\in \cC^\infty(\SO(T\op T^*\op 1))$. For generalized
  vector fields $u=X+\xi+\lam$ and $v=Y+\eta+\mu$, we have
  \begin{equation*}
    \begin{split}
      [(B,A) u ,(B,A) v] = {} & (B,A)[u,v]+i_Yi_X(dB+A\wedge dA) -2i_Y
      i_X dA \cdot A \\ & + i_Y i_X dA + 2(\lambda i_Y dA - \mu i_X
      dA).
    \end{split}
  \end{equation*}
  In particular, the action of $(B,A)$ commutes with the Courant
  bracket if and only if $A$ and $B$ are closed.
\end{proposition}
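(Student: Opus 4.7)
The proof is essentially a direct computation, but the cleanest organization comes from splitting the $(B,A)$-transformation into its $B$- and $A$-parts and treating them separately. The composition law established just above the proposition gives $(B,A)=(B,0)\circ(0,A)$, so the plan is: first treat the pure $B$-field case (which is standard and recovers the familiar $D_n$-correction $i_Yi_X\,dB$); then treat the pure $A$-field case by direct expansion; finally combine the two using the composition law and observe that the $A$-correction is unaffected by a subsequent $B$-transformation.

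For the $B$-field step, I would note that $(B,0)$ acts as the identity on the trivial summand, so the only change to the Courant bracket comes from its action on the $T\op T^*$ part. This is exactly the classical $D_n$-computation of Gualtieri, which produces the correction $i_Yi_X\,dB$, and commutes with the $1$-component terms $\mu\,d\lambda-\lambda\,d\mu+i_Xd\mu-i_Yd\lambda$.

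The heart of the proof is the $A$-field step. Writing $u'=(0,A)u=X+\xi'+\lambda'$ with $\xi'=\xi-2\lambda A-(i_XA)A$ and $\lambda'=\lambda+i_XA$, and likewise $v'=Y+\eta'+\mu'$, I would substitute into the Courant bracket formula and expand term by term. The vector part is clearly preserved, $[X,Y]=[X,Y]$. For the $1$-form sector I would use Cartan's formula $\cL_X=di_X+i_Xd$ to rewrite $\cL_X\eta'-\cL_Y\xi'$ and $-\tfrac12d(i_X\eta'-i_Y\xi')$, together with the Leibniz rule for $\cL_X((i_YA)A)$; identities like $\cL_X(i_YA)-\cL_Y(i_XA)=i_{[X,Y]}A+2i_Yi_XdA\cdot\text{(??)}$ must be handled with care. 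For the scalar sector I would use $d(i_XA)=\cL_XA-i_XdA$ and similarly for $Y$. Then I would group all non-$(0,A)[u,v]$ terms and identify them with $i_Yi_X(A\wedge dA)-2i_Yi_XdA\cdot A+i_Yi_XdA+2(\lambda i_YdA-\mu i_XdA)$.

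Combining the two steps: since the $A$-correction is a generalized vector field with vanishing $T$-component, applying $(B,0)$ to it leaves it unchanged (the $B$-action requires a vector input), so one simply adds the $B$-correction $i_Yi_X\,dB$ on top. This gives the stated formula, noting that $dB+A\wedge dA$ groups the $B$-correction with one of the $A$-contributions. For the final clause, if both $A$ and $B$ are closed, every correction term vanishes. Conversely, if the action commutes for all $u,v$, taking $\lambda=1,\mu=0$ and $X=Y=0$ isolates $2\,i_YdA$, forcing $dA=0$ at every point; once $dA=0$, the remaining correction reduces to $i_Yi_X\,dB$, which must vanish for all $X,Y$, so $dB=0$. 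The main obstacle is purely bookkeeping: keeping track of the many terms produced by the wedge-and-contraction products $(i_XA)A$ and $(i_YA)A$ under Lie derivatives, and recognizing the correct groupings that yield the compact form $i_Yi_X(A\wedge dA)$.
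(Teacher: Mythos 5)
The paper gives no proof of this proposition (it is left as a direct verification), so there is nothing to compare against line by line; judged on its own terms, your organizing strategy is sound and is almost certainly the intended one. The factorization $(B,A)=(B,0)\circ(0,A)$ from the composition law is correct, the reduction of the $B$-step to the classical $D_n$ computation is legitimate (the extra terms $\mu\,d\lambda-\lambda\,d\mu+i_Xd\mu-i_Yd\lambda$ are untouched by a pure $B$-field since it fixes $\lambda$ and $\mu$), and your key structural observation --- that the $A$-correction has no $T$-component, hence is fixed by $(B,0)$, so the two corrections simply add and $i_Yi_X\,dB$ merges with $i_Yi_X(A\wedge dA)$ --- is exactly right.

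That said, two things keep this from being a complete proof. First, the entire content of the proposition is the $A$-field expansion, and you have only described it, leaving an explicit unresolved identity marked ``(??)''. The computation does close up: for instance, in the scalar sector one needs
$i_Xd(i_YA)-i_Yd(i_XA)=i_{[X,Y]}A+i_Yi_X\,dA$, which follows from $\cL_X(i_YA)=i_Xd(i_YA)$ and $i_{[X,Y]}=\cL_Xi_Y-i_Y\cL_X$; the $1$-form sector is handled by the same Cartan-calculus identities applied to $\cL_X\eta'$, $\cL_Y\xi'$ and the terms $(i_XA)A$, $(i_YA)A$. You need to actually carry this out. Second, your converse argument as written is vacuous: setting $X=Y=0$ kills \emph{every} correction term, including the $2\lambda\, i_YdA$ you want to isolate. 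Take instead $X=0$, $\mu=0$, $\lambda=1$ and $Y$ arbitrary to get the correction $2\,i_YdA$, forcing $dA=0$; then the residual correction $i_Yi_X\,dB$ for arbitrary $X,Y$ forces $dB=0$. With those two repairs the argument is complete.
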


Define the group
$$\Omega^{2+1}_{cl}(M)=\{(B,A)\in \cC^\infty(\SO(T\op T^*\op 1)) \st B\in \Omega^{2}_{cl}(M),  A\in \Omega^1_{cl}(M)\}.$$
The group $\Omega^2_{cl}(M)$ is a central subgroup in
$\Omega^{2+1}_{cl}(M)$, so $\Omega^{2+1}_{cl}(M)$ is the central
extension $1\to \Omega^{2}_{cl}(M)\to \Omega^{2+1}_{cl}(M) \to
\Omega^{1}_{cl}(M)\to 1.$



\begin{proposition}\label{prop:semidirect-product}
  The group of orthogonal transformations of $T\op T^*\op 1$
  preserving the Courant bracket is $\mathrm{Diff}(M) \ltimes
  \Omega^{2+1}_{cl}(M)=: \mathrm{GDiff}(M)$, called the group of
  generalized diffeomorphisms of $M$. The product is given by
  \begin{align*}
    (f\ltimes (B,A))\circ (g\ltimes (D,C)) & = fg \ltimes
    (g^*B,g^*A)(D,C) \nonumber \\ & = fg \ltimes (g^*B+D+g^*A \wedge
    C,g^*A+C).
  \end{align*}
\end{proposition}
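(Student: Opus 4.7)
The plan is to establish the stated equality as subgroups of orthogonal bundle automorphisms of $T\op T^*\op 1$ covering diffeomorphisms of $M$, and then derive the semidirect-product law from composition. For the containment $\Diff(M)\ltimes \Omega^{2+1}_{cl}(M) \subseteq \GDiff(M)$, diffeomorphisms act naturally on $T\op T^*\op 1$ preserving the inner product and Courant bracket, while any $(B,A)\in\Omega^{2+1}_{cl}(M)$ is orthogonal (as the exponential of an element of $\fso(T\op T^*\op 1)$) and preserves the bracket by Proposition \ref{prop:e(B,a)-courant} applied with $dB=0$ and $dA=0$.

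For the reverse inclusion, suppose $F$ is an orthogonal bundle automorphism of $T\op T^*\op 1$ covering a diffeomorphism $f$ and preserving the Courant bracket. The first step is to show that $F$ preserves the anchor $\pi\colon T\op T^*\op 1\to T$, i.e. $\pi\circ F = df\circ \pi$. This follows from the Leibniz-type identity $[u, h v] = h[u,v] + (\pi u)(h)\,v$ valid for all $h\in\cC^\infty(M)$: applying $F$ and using that $F$ intertwines multiplication by $h$ with multiplication by $h\circ f^{-1}$ forces $(\pi Fu)(h\circ f^{-1}) = (\pi u)(h)\circ f^{-1}$ for every $h$, whence $\pi F = df\circ \pi$. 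Consequently $F$ preserves $\ker\pi = T^*\op 1$, and by orthogonality it also preserves $(T^*\op 1)^\perp = T^*$.

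Next, set $F_0 := f^{-1}\circ F$; this is an orthogonal automorphism over the identity preserving both $T^*$ and $T^*\op 1$ and inducing the identity on the quotient $T$, so in the decomposition $T\op T^*\op 1$ it has the shape
$$F_0 = \begin{pmatrix} I & 0 & 0 \\ \mathcal{B} & Q & P \\ A & 0 & c \end{pmatrix},$$
with unknown blocks $\mathcal{B}, Q, P, A, c$. Imposing the orthogonality relation $F_0^t g F_0 = g$ for the inner product $g$, together with $\det F_0 = +1$, yields $Q = I$, $c = 1$, $P = -2A$ (viewing the $1$-form $A$ as a map $1\to T^*$) and $\mathcal{B}+\mathcal{B}^t = -2A\ot A$; splitting $\mathcal{B}$ into symmetric and skew parts, $\mathcal{B} = B - A\ot A$ for some $2$-form $B$, and $F_0$ is then exactly the element $(B,A) = \exp(B+A)$ described in the text. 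Proposition \ref{prop:e(B,a)-courant} now tells us that $F_0$ preserves the Courant bracket precisely when $B$ and $A$ are both closed, so $F_0 \in \Omega^{2+1}_{cl}(M)$ and $F \in \Diff(M)\ltimes \Omega^{2+1}_{cl}(M)$.

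Finally, the semidirect-product multiplication is a direct computation: since $g \in \Diff(M)$ pulls $(B,A)$ back to $(g^*B, g^*A)$, one has $(f\ltimes (B,A))\circ (g\ltimes (D,C)) = fg \ltimes (g^*B,g^*A)(D,C)$, and expanding via the composition law $(B,A)(B',A') = (B+B'+ A\wedge A',\, A+A')$ already established for $\SO(T\op T^*\op 1)$ gives the stated formula. The delicate step is the matrix computation identifying $F_0$ with an exponential $(B,A)$: one must track the $A\ot A$ correction to the symmetric part of $\mathcal{B}$ and use the $\SO$ condition to rule out $c = -1$; the remainder of the argument is essentially bookkeeping inside the Courant-algebroid formalism already set up.
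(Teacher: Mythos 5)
The paper states this proposition without proof (it is the routine $B_n$-analogue of the corresponding statement for $T\op T^*$), so there is no in-paper argument to compare against; what you have written is the standard proof, and its overall structure is sound: reduce to an automorphism $F_0$ covering the identity, show it preserves $\ker\pi=T^*\op 1$ and hence its orthogonal complement $T^*$, identify $F_0$ with $\exp(B+A)$ by a block-matrix computation, and invoke Proposition \ref{prop:e(B,a)-courant} to get closedness of $B$ and $A$. The matrix step checks out: orthogonality forces $Q=I$, $c^2=1$, $P=-2cA$ and $\mathcal{B}+\mathcal{B}^t=-2A\ot A$, which for $c=1$ is exactly the matrix $(B,A)$ displayed in Section \ref{subsec:group-gen-diff}, and the semidirect-product law then follows from the composition rule $(B,A)(B',A')=(B+B'+A\wedge A',A+A')$ together with naturality of pullback.

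Two small corrections. First, the Leibniz identity you quote, $[u,hv]=h[u,v]+(\pi u)(h)v$, holds for the Dorfman product of Remark \ref{rem:Dorfmann-product} but not for the antisymmetrized Courant bracket, which satisfies $[u,hv]=h[u,v]+(\pi u)(h)v-(u,v)\,dh$. Your conclusion $\pi\circ F=df\circ\pi$ still follows: either apply the corrected identity to pairs with $(u,v)=0$ (which suffices, since at each point one can choose a nonvanishing local section $v$ orthogonal to a given $u$), or pass to the Dorfman product, which is preserved whenever the Courant bracket and the pairing are. Second, you exclude $c=-1$ via the determinant condition; note that $c=-1$ is in any case incompatible with bracket preservation, since flipping the sign of $\lambda$ and $\mu$ reverses the sign of the terms $i_Xd\mu-i_Yd\lambda$ in the bracket while leaving $F_0[u,v]$ unchanged, and no choice of $A$, $B$ can compensate for this. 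So the conclusion holds for all orthogonal bracket-preserving automorphisms, not only the special orthogonal ones. Neither point affects the validity of your argument.
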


We describe $\fgdiff(M)$, the Lie algebra of $\GDiff(M)$, by
differentiating the action of a smooth one-parameter family of
generalized diffeomorphisms $F_t=f_t\ltimes (B_t,A_t)$ such that
$F_t\circ F_s=F_{t+s}$ and $F_0=\id$. By Proposition
\ref{prop:semidirect-product} and $F_t\circ F_s=F_{t+s}$ we have the
three equations \begin{align*} f_{t+s} &=f_t\circ f_s, & A_{t+s} &=A_s
  + f_s^*A_t, & B_{t+s} &=B_s + f_s^*B_t + f_s^*A_t\wedge A_s.
\end{align*}

The first equation says that $\{f_t\}$ is a one-parameter subgroup of
diffeomorphisms of $M$. Let $X$ be the corresponding vector field. From the second equation, $A_t=\int_0^t f_s^* a\, ds$,
where $a=\frac{dA_t}{dt}_{|t=0}$. And from the third equation,
$$\frac{dB_t}{dt}_{|t=s}=f_s^*\frac{dB_t}{dt}_{|t=0} + f_s^*\frac{dA_t}{dt}_{|t=0}\wedge A_s,$$ so $B_t=\int_0^t (f_s^* b + f_s^* a\wedge A_s) ds$, where $b=\frac{dB_t}{dt}_{|t=0}$ and $A_s$ depends on $a$.

Using the convention $\cL_X Y=-\frac{d}{dt}_{|t=0} f_{t\, *} Y$ for
the Lie derivative of a vector field $Y$, we see that the
infinitesimal action of the one-parameter subgroup $\{F_t\}$ is
\begin{equation*}
  -\frac{d}{dt}_{|t=0} F_{t\, *} (Y+\eta+\mu) = \cL_X (Y+\eta+\mu) - i_Y b + 2\mu a - i_Y a,
\end{equation*}
which only depends on the action of $(X,b,a)$. We thus make the
identification $$\fgdiff(M)=\cC^\infty(T)\op \Omega^2_{cl}(M) \op
\Omega^1_{cl}(M).$$

Conversely, given an infinitesimal generalized diffeomorphism
$(X,b,a)$, we can integrate it to a one-parameter subgroup of
generalized diffeomorphisms using the equations above.

\begin{remark}\label{remark:integrating-time-dependent}
  It is also possible to integrate a time-dependent infinitesimal
  generalized diffeomorphism. From $(X_t,b_t,a_t)$, we get
  $B_t=\int_0^t (f_s^* b_s + f_s^* a_s\wedge A_s) ds$ and
  $A_t=\int_0^t f_s^* a_s ds$, using a method analogous to that used to show Proposition 2.3
  in \cite{gualtieri-annals:2011}.
\end{remark}

\begin{remark}\label{rem:Dorfmann-product}
  We map $\cC^\infty(T\op T^*\op 1)$ to $\fgdiff(M)$ by
$$ X + \xi + \lambda \; \mapsto \; (X, d\xi, d\lambda),$$
so that we regard $X+\xi+\lambda$ as defining an infinitesimal
generalized diffeomorphism. Its natural action on sections of $T\op
T^*\op 1$ gives an action of a generalized vector field on generalized
vector fields, called the Dorfman product
$$ (X + \xi +\lambda) (Y + \eta +\mu) = [X,Y] + \cL_X \eta  + i_X d\mu - i_Y d\xi + 2\mu d\lambda - i_Y d\lambda. $$
The antisymmetrization of the Dorfman product gives the Courant
bracket defined above.
\end{remark}

\subsection{Differential forms as spinors}

By analogy with $D_n$-generalized geometry, the differential forms
$\bigwedge^\bullet T^*M$ are a Clifford module over the algebra
$\cC^\infty(Cl(T\op T^*\op 1))$ with an action defined~by
$$(X+\xi+\lambda)\cdot \phi = i_X \phi + \xi\wedge \phi + \lambda \tau \phi,$$
where $\tau \phi = \phi^{+} - \phi^{-}$ for the even $\phi^+$ and odd
$\phi^-$ parts of $\phi$. Thus, $\tau$ defines an involution of
$\bigwedge^\bullet T^*M$. The action defined above satisfies the
Clifford condition $(X+\xi+\lambda)^2\cdot \phi =
(X+\xi+\lambda,X+\xi+\lambda) \phi,$ as $\tau$ anticommutes with
interior and exterior products.

The action of $B$ and $A$ fields, $B,A\in \cC^\infty(\fso(T\op T^*\op
1))$, on $\bigwedge^\bullet T^*M$ via the spinorial representation
$\sigma:\cC^\infty(\Spin(T\op T^*\op 1))\to \Aut \textstyle
(\bigwedge^\bullet T^*M)$ is given by the Lie algebra action $
\sigma_*(B)\phi =-B\wedge \phi$, $\sigma_*(A)\phi =-A\wedge \tau
\phi,$ and the Lie group action
\begin{align*}
  \sigma(\exp B)\phi & = \phi - B\wedge \phi + B^2\wedge \phi + \ldots = e^{-B}\wedge \phi,\\
  \sigma(\exp A)\phi & = \phi -A\wedge \tau \phi = e^{-A\tau}\wedge
  \phi.
\end{align*}
Since $B$ and $A$ commute, the action of a $B+A$-field is given by
$$\sigma(\exp(B+A))\phi=e^{-B}e^{-A\tau}\phi=e^{-A\tau}e^{-B}\phi.$$

The Lie derivative of a spinor with respect to a generalized vector
field $X+\xi+\lambda$, as also for generalized vector fields in Remark
\ref{rem:Dorfmann-product}, is defined by mapping the vector field to
the infinitesimal generalized diffeomorphism $(X,d\xi,d\lambda)\in
\fgdiff(M)$ and differentiating the action of the one-parameter
subgroup $\{F_t\}$ to which it integrates:
$$\mathbf{L}_{X+\xi+\lambda} \phi = -\frac{d}{dt}_{|t=0} F_t \phi = \cL_X \phi + d\xi\wedge \phi + d\lambda \tau \phi.$$

The Lie derivative of a spinor satisfies a Cartan formula, where the
interior product is replaced by the Clifford action, $d
((X+\xi+\lambda)\cdot \phi) + (X+\xi+\lambda)\cdot
d\phi=\mathbf{L}_{X+\xi+\lambda}\phi$.

The differential forms $\bigwedge^\bullet T^*M$ are endowed with an
$\SO(T\op T^*\op 1)$-invariant pairing with values in $\bigwedge^n
T^*M$ coming from the Chevalley pairing on spinors
(\cite{chevalley-spinors:1954}). Let $\alpha$ be the anti-involution
defined by $\alpha(\omega)=(-1)^{{\deg \omega \choose 2}}\omega$ on
forms of pure degree $\omega$ and extended linearly. For $\rk T=\dim
M$ odd, the pairing is given by
\begin{equation*}
  \la \phi, \psi \ra =  [\al(\phi^-)\wedge \psi^+ - \al(\phi^+)\wedge \psi^- ]_{top}, 
\end{equation*}
while for $\rk T=\dim M$ even, it is given by
$$\la \phi,\psi \ra = [\alpha(\phi^+)\wedge \psi^+ + \alpha(\phi^-)\wedge \phi^-]_{top}.$$

\begin{remark}\label{rem:pairing-3-man}
  In the case of $3$-manifolds, \begin{align*}
    \la \phi,\psi \ra & =[\alpha(\phi^+)\wedge \psi^- - \alpha(\phi^-)\wedge \psi^+]_{top} \\ & =[(\phi_0 - \phi_2)\wedge (\psi_1+\psi_3) - (\phi_1-\phi_3)\wedge(\psi_0+\psi_2)]_{top} = \\
    & = \phi_0 \psi_3 + \psi_0 \phi_3 - \phi_1\wedge \psi_2 - \psi_1
    \wedge \phi_2,
  \end{align*}
  and, in particular, $\la \phi,\phi \ra = 2(\phi^0 \phi^3 -
  \phi^1\wedge \phi^2 )$, thus defining a quadratic form of signature
  $(4,4)$.
\end{remark}

\section{$G_2^2$-structures on $3$-manifolds}
\label{sec:G22-str}

In \cite{hitchin-CY:2003}, for $n=2m$, generalized Calabi-Yau
structures are defined by a complex closed form $\phi$ that is either even or
odd which is a pure spinor and satisfies $\la \phi,\bar{\phi}\ra \neq
0$. This structure defines a reduction to the stabilizer of the
spinor~field, $\SU(m,m)$.

In the case of a $3$-manifold, we pointwise have a seven-dimensional
generalized tangent space with an inner product of signature $(4,3)$. Its
space of spinors is eight-dimensional and equipped with a signature
$(4,4)$ inner product. In this setting, pure spinors correspond to
null spinors with respect to the inner product, while non-pure spinors
correspond to non-null spinors. Moreover, up to scalar multiplication, there are only
two orbits under the action of $\Spin(4,3)$: the null ones and the
non-null ones. Hence, all non-null spinors have isomorphic
stabilizers. While the stabilizer of a non-zero spinor in $\Spin(7)$
is the compact exceptional Lie group $G_2$, for the group
$\Spin(4,3)$, the stabilizer of a non-null spinor is its non-compact
real form $G_2^2$. The study of the structure given on a $3$-manifold
by a section of $\bigwedge^{\bullet} T^*M$ consisting of closed
non-null spinors motivates the following definition.

\begin{definition}
  A $G_2^2$-generalized structure on a $3$-manifold $M$ is an
  everywhere non-null section of the real spinor bundle, $\rho \in
  \Omega^\bullet(M)$, such that $d\rho = 0$.  For the sake of brevity,
  we call them $G_2^2$-structures.
\end{definition}

\begin{remark}
  Given a section $\rho \in \Omega^\bullet(M)$ consisting of closed
  null spinors, its annihilator $\Ann(\rho)\subset T\op T^*\op 1$
  defines an integrable real Dirac structure, i.e., a maximal
  isotropic subbundle of $T\op T^*\op 1$ involutive with respect to
  the Courant bracket. The involutivity is a consequence of the
  closedness of $\rho$, as in Proposition 1 of \cite{hitchin-CY:2003}.
\end{remark}

\subsection{Existence of $G_2^2$-structures}

From the non-nullity condition we have that $\la \rho,\rho \ra=
2(\rho_0\rho_3 - \rho_1\wedge\rho_2)$ defines a volume form on $M$, so
$G_2^2$-structures only exist over orientable manifolds. In fact,
given any volume form $\omega$, $c+\omega$ defines a $G_2^2$-structure
for any constant $c\neq 0$. Since $\rho$ is closed, $\rho_0$ must be a constant.

From now on, $M$ will denote a compact orientable $3$-manifold. Let $\GDiff^+(M)$ be the group of orientation-preserving generalized diffeomorphisms.

\begin{proposition} Up to $\GDiff^+(M)$-equivalence, a $G_2^2$-structure
  $\rho$ with $\rho_0\neq 0$ on $M$
  \begin{enumerate}
  \item is of the form $c+\omega$ for $c\neq 0$ and $\omega$ a volume
    form, and
  \item is completely determined by the cohomology
    classes $$([\rho_0],[\la \rho,\rho \ra])\in (H^0(M,\R)\setminus
    \{0\}) \op (H^3(M,\R)\setminus \{0\}).$$
  \end{enumerate}
\end{proposition}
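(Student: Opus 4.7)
The plan is to reduce $\rho$ to a normal form $c+\omega$ with $\omega$ a volume form by means of a single $(B,A)$-field transformation, and then to use Moser's theorem to compare two such normal forms with the same cohomological data.

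First, I would note that $d\rho=0$ forces each component $\rho_i$ to be closed, so $\rho_0$ is a (nonzero) constant, which I call $c$. Next I would expand $(B,A)\cdot\rho=e^{-A\tau}e^{-B}\rho$ using the spinor action described above. On a $3$-manifold both $B\wedge B=0$ and $(A\tau)^2\rho=-A\wedge A\wedge\rho=0$, so the exponentials truncate and a direct computation gives
\begin{align*}
((B,A)\cdot\rho)_0 &= c,\\
((B,A)\cdot\rho)_1 &= \rho_1 - cA,\\
((B,A)\cdot\rho)_2 &= \rho_2 - cB + A\wedge\rho_1,\\
((B,A)\cdot\rho)_3 &= \rho_3 - A\wedge\rho_2 - B\wedge\rho_1 + cA\wedge B.
\end{align*}
Setting $A=\rho_1/c$ and $B=\rho_2/c$ (both closed, since $d\rho_1=d\rho_2=0$, so $(B,A)\in\Omega^{2+1}_{cl}(M)\subset\GDiff^+(M)$) annihilates the degree $1$ and $2$ parts, and a brief rearrangement using $\rho_1\wedge\rho_1=0$ collapses the degree $3$ part to $\frac{1}{2c}\la\rho,\rho\ra$, which is a volume form by non-nullity. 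This proves (1) with $\omega=\frac{1}{2c}\la\rho,\rho\ra$.

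For (2), the classes $[\rho_0]\in H^0(M,\R)$ and $[\la\rho,\rho\ra]\in H^3(M,\R)$ are $\GDiff^+(M)$-invariants: any $(B,A)$-field acts as an orthogonal transformation of the spinor bundle and so preserves the Chevalley pairing, while an orientation-preserving diffeomorphism of a connected compact oriented $3$-manifold acts trivially on both $H^0(M,\R)$ and $H^3(M,\R)$. Conversely, applying (1) to two structures $\rho,\rho'$ with matching invariants reduces them to $c+\omega$ and $c+\omega'$ with $[\omega]=[\omega']\in H^3(M,\R)\cong\R$. Two cohomologous nowhere-vanishing $3$-forms on a compact oriented $3$-manifold have equal (nonzero) integral and hence induce the same orientation, so Moser's theorem provides an orientation-preserving diffeomorphism $f$ with $f^*\omega'=\omega$, yielding $f^*(c+\omega')=c+\omega$.

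The main subtle point is the computation of $(B,A)\cdot\rho$ and the observation that the above choice of $A$ and $B$ forces the top-degree component to reassemble into exactly $\frac{1}{2c}\la\rho,\rho\ra$; this is what simultaneously guarantees the shape of the normal form and the natural appearance of the second invariant. Once that identity is in hand, the verification of $\GDiff^+(M)$-invariance and the Moser step are routine.
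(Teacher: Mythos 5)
Your proof is correct and follows essentially the same route as the paper: act by the closed $(B+A)$-field built from $\rho_1/\rho_0$ and $\rho_2/\rho_0$ to kill the degree $1$ and $2$ components (leaving $\rho_0+\tfrac{1}{2\rho_0}\la\rho,\rho\ra$), then invoke Moser's theorem on the resulting volume forms. You merely make explicit the truncated exponential computation and the $\GDiff^+(M)$-invariance of the two cohomology classes, which the paper leaves implicit.
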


\begin{proof}
  Let $\rho = \rho_0 + \rho_1 + \rho_2 + \rho_3$ be a
  $G_2^2$-structure with $\rho_0\neq 0$. It is equivalent, by the
  action of the closed $(B+A)$-field $\left(-\frac{\rho_1}{\rho_0},
    -\frac{\rho_2}{\rho_0}\right)$ to $$\rho_0 +
  \frac{1}{\rho_0}(\rho_0\rho_3 - \rho_1\wedge \rho_2)=\rho_0 +
  \frac{1}{2\rho_0}\la \rho,\rho \ra,$$ which is of the form
  $c+\omega$ for $c\neq 0$ and $\omega$ a volume form, as stated in
  the first part.  By Moser's theorem (\cite{moser-1965}), any two volume
  forms in the same cohomology class are diffeomorphic.\end{proof}

We deal now with the existence of $G_2^2$-structures with $\rho_0= 0$.
\begin{proposition}\label{prop:G22-str-rho0=0}
  If a compact $3$-manifold is endowed with a $G_2^2$-structure such that
  $\rho_0=0$, then it is diffeomorphic to the mapping torus of a
  symplectic surface by a symplectomorphism.  Conversely, any such
  mapping torus can be endowed with a $G_2^2$-structure with
  $\rho_0=0$.
\end{proposition}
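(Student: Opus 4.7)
The proof has two directions.

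\emph{Forward direction.} Let $(M,\rho)$ be a $G_2^2$-structure with $\rho_0=0$. By Remark \ref{rem:pairing-3-man}, $\la\rho,\rho\ra=-2\,\rho_1\wedge\rho_2$ is a nowhere-zero $3$-form, so $\rho_1$ is a nowhere-vanishing closed $1$-form (closedness coming from $d\rho=0$) and $\rho_2$ is closed. I would apply Tischler's theorem to approximate $\rho_1$ in the $C^\infty$ topology by a closed $1$-form $\alpha$ with integer periods; such an $\alpha$ is the pullback $\pi^*d\theta$ of a submersion $\pi:M\to S^1$, which by Ehresmann is a smooth fibration with fibre a closed surface $\Sigma$. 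Since $\alpha\wedge\rho_2\neq 0$ is an open condition, $\omega:=\rho_2|_\Sigma$ remains a closed nondegenerate, hence symplectic, form on each fibre for $\alpha$ sufficiently close to $\rho_1$.

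Next I would exhibit $M$ as a mapping torus of $(\Sigma,\omega)$ by a symplectomorphism, using the flow of a carefully chosen vector field. Because $\alpha\wedge\rho_2\neq 0$ forces $\rho_2$ to have pointwise rank $2$, its kernel $\ker\rho_2\subset TM$ is a line bundle transverse to $\ker\alpha$, and I take $Y$ to be the unique section of $\ker\rho_2$ with $\alpha(Y)=1$. Cartan's formula together with $d\alpha=d\rho_2=0$ yields $\cL_Y\alpha=0$ and $\cL_Y\rho_2=0$, so the flow $\psi_t$ of $Y$ preserves $\alpha$ and $\rho_2$. As $\pi_*Y=\partial_\theta$, the time-one map returns each fibre to itself, so $\phi:=\psi_1|_\Sigma$ is a symplectomorphism of $(\Sigma,\omega)$, and the smooth map $\Sigma\times\R\to M$, $(x,t)\mapsto\psi_t(x)$, descends to a diffeomorphism $\Sigma_\phi\xra{\sim}M$.

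\emph{Converse.} Given a mapping torus $M=\Sigma_\phi$ of $(\Sigma,\omega)$ with $\phi^*\omega=\omega$, the form $dt+\omega$ on $\Sigma\times\R$ is invariant under $(x,t)\mapsto(\phi(x),t+1)$ and hence descends to a closed form $\rho=\rho_1+\rho_2$ on $M$ with $\rho_1\wedge\rho_2=dt\wedge\omega$ nowhere zero. This is a $G_2^2$-structure with $\rho_0=0$.

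The main technical obstacle is the forward direction: $\rho_1$ itself need not have rational periods, so it may fail to define a submersion onto $S^1$, and Tischler's approximation is needed to upgrade the foliation $\ker\rho_1$ to a genuine fibration; openness of all the non-degeneracy conditions keeps everything intact under the perturbation. The canonical choice $Y\in\ker\rho_2$ is then the key idea that builds in $\phi^*\omega=\omega$ automatically, bypassing any separate Moser argument along the fibres.
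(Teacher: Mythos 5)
Your proof is correct and follows essentially the same route as the paper: Tischler's approximation of $\rho_1$ to get a fibration over $S^1$, the flow of the vector field in $\ker\rho_2$ normalized against the $1$-form to realize $M$ as a mapping torus with symplectomorphism monodromy, and the same explicit form $\rho_1+\rho_2$ for the converse. Your explicit remark that openness of $\alpha\wedge\rho_2\neq 0$ preserves nondegeneracy of $\rho_2$ on the perturbed fibres is a point the paper passes over more quickly, but the argument is the same.
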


\begin{proof}
  From $\rho_0 = 0$ and $\la \rho,\rho \ra \neq 0$ we get $\rho_1\wedge
  \rho_2\neq 0$, so we have nowhere vanishing closed $1$-forms and $2$-forms
  $\rho_1$ and $\rho_2$. We can perform a small deformation on
  $\rho_1$ to give it rational periods (as shown for instance in
  \cite{tischler:1970}). A suitable multiple has integral periods and
  defines a fibration $\pi:M\to \SSS^1$. To define $\pi$, take a base
  point $m\in M$ and let $\pi(x)=e^{2\pi i \int_{c(t)} \rho_1 dt}$
  where $c(t)$ is any curve joining $m$ and $x$. Let $X$ be the unique
  vector field satisfying $i_X\rho_2=0$ and $i_X \rho_1=1$ (so it
  is transversal to the fibration, $d\pi(X)\neq 0$).  Integrate the
  vector field $X$ to a one-parameter subgroup of diffeomorphisms
  $\{f_t\}$ such that $f_0=id$. Let $S$ be the fibre over the point
  $m\in M$. By the transversality, we have that $M$ is diffeomorphic
  to the mapping torus of $f_1$, i.e., the manifold $$\frac{S \ts
    [0,1]}{\{(x,0) \sim (f_1(x),1)\}_{x\in S}}.$$ The diffeomorphism
  is given by $[(y,t)]\mapsto f_t(y)\in M$.  Furthermore, $\cL_X
  \rho_2=d( i_X\rho_2)=0$, so $f_t^* \rho_2 = \rho_2$ and the fibres
  have a symplectic structure given by the restriction of $\rho_2$,
  which is closed and non-degenerate in every fibre $f_t(S)$. Thus,
  $S$ is a symplectic manifold and $f_1$ is a symplectomorphism.

  For the second part, let $M_f$ be the mapping torus of an orientable
  surface $(S,\omega)$ by a symplectomorphism $f$. We define a
  $2$-form $\rho_2$ on $M_f$ as the form which is fibrewise
  $\omega$. The form $\rho_2$ is well defined since
  $f^*\omega=\omega$. Let $\rho_1$ be the pullback of a non-vanishing
  $1$-form over the circle. The form $\rho_1 + \rho_2$ then defines a
  $G_2^2$-structure on $M_f$.
\end{proof}

\begin{lemma}\label{lemma:map-torus-difeo}
  The mapping torus of an orientable surface $S$ by an orientation-preserving diffeomorphism is diffeomorphic to the mapping torus of
  $S$ by a symplectomorphism.
\end{lemma}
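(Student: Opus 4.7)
Let $f\colon S\to S$ be an orientation-preserving diffeomorphism of a compact orientable surface. The plan is to isotope $f$ to a symplectomorphism of some symplectic form on $S$ and then invoke the elementary principle that isotopic diffeomorphisms produce diffeomorphic mapping tori.

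First I would fix any area form $\omega$ on $S$, which is automatically a symplectic form. Because $f$ preserves orientation, the change-of-variables formula gives $\int_S f^*\omega = \int_S \omega$; assuming $S$ connected (otherwise $f$ permutes components and we argue componentwise), $H^2(S;\R)\cong \R$, and hence $[f^*\omega] = [\omega]$ in de Rham cohomology.

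Next, I would apply Moser's theorem \cite{moser-1965} to the path $\omega_t := (1-t)\omega + t\, f^*\omega$ of symplectic forms lying in a common cohomology class, obtaining a smooth isotopy $\{\psi_t\}_{t\in[0,1]}$ of $S$ with $\psi_0 = \id_S$ and $\psi_1^* (f^*\omega) = \omega$. Setting $g := f\circ \psi_1$, we then have $g^*\omega = \omega$, so $g$ is a symplectomorphism of $(S,\omega)$, and $\{f\circ \psi_t\}$ is a smooth isotopy from $f$ to $g$.

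Finally, to conclude that $M_f$ and $M_g$ are diffeomorphic, I would use the standard construction: viewing $M_h = (S\times \R)/((x,s)\sim (h(x),s-1))$, an isotopy $h_t$ between $h_0 = f$ and $h_1 = g$ produces an explicit diffeomorphism of the two mapping tori by composing the identity on $S\times \R$ with a smooth interpolation, governed by a cut-off function on $[0,1]$, between the two gluings. This step is completely routine once the isotopy has been produced, so I do not expect any real obstacle; the whole content of the lemma is the Moser argument applied to $\omega$ and $f^*\omega$.
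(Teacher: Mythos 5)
Your proposal is correct and follows essentially the same route as the paper: average $f^*\omega$ against $\omega$ in their common cohomology class, run Moser to get an isotopy carrying one to the other, compose with $f$ to obtain an isotopic symplectomorphism, and use that isotopic monodromies yield diffeomorphic mapping tori. The only difference is the cosmetic one of composing with the Moser isotopy on the right rather than the left.
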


\begin{proof}
  Let $f$ be the orientation-preserving diffeomorphism and let
  $\omega$ be a volume form of the surface $S$. The $2$-forms
  $f^*\omega$ and $\omega$ have the same volume and hence define the
  same cohomology class in $H^2(S,\R)$. We apply Moser's argument
  (\cite{moser-1965}) to the family $\omega_t=t\omega + (1-t)
  f^*\omega$, so we get a family of diffeomorphisms $\{\phi_t\}$, with
  $\phi_0=\id$, such that $\phi_t^* \omega_t=\omega$. Then, we have
  that $(\phi_1\circ f)^*=\phi_1^* f^*\omega=\omega$, i.e.,
  $\phi_1\circ f$ is a symplectomorphism, and $\{\phi_t\circ f\}$
  defines a diffeotopy between $f$ and $\phi_1\circ f$ which makes the
  mapping torus of $f$ diffeomorphic to the mapping torus of
  $\phi_1\circ f$.
\end{proof}

The following theorem is a consequence of the two previous results.

\begin{theorem}\label{theo:map-tor-twisted-G22-str}
  A compact $3$-manifold $M$ admits a $G_2^2$-structure with $\rho_0=0$ if and
  only if $M$ is the mapping torus of an orientable surface by an
  orientation-preserving diffeomorphism.
\end{theorem}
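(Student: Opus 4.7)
The proof is essentially a packaging of the two preceding results, so the plan is short and the main task is to check that the hypotheses and conclusions line up correctly.

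For the forward implication, I would invoke Proposition \ref{prop:G22-str-rho0=0} directly: a $G_2^2$-structure with $\rho_0=0$ forces $M$ to be diffeomorphic to the mapping torus of a symplectic surface $(S,\omega)$ by a symplectomorphism $f$. I would then observe that such an $(S,f)$ is a particular instance of the data in the statement. Indeed, a symplectic surface is automatically orientable (the symplectic form $\omega$ is a volume form), and a symplectomorphism satisfies $f^*\omega=\omega$, hence preserves the induced orientation. This is all that is needed for the ``only if'' direction.

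For the converse, I would start with $M$ the mapping torus of an orientable surface $S$ by an orientation-preserving diffeomorphism $f$ and apply Lemma \ref{lemma:map-torus-difeo} to replace $f$ by a symplectomorphism $\phi_1\circ f$ with respect to some volume/symplectic form on $S$, at the cost of a diffeomorphism of $M$. Once $M$ is realized as the mapping torus of a symplectic surface by a symplectomorphism, the second half of Proposition \ref{prop:G22-str-rho0=0} furnishes the $G_2^2$-structure with $\rho_0=0$ explicitly as $\rho_1+\rho_2$, where $\rho_2$ is the fibrewise symplectic form and $\rho_1$ the pullback of a nonvanishing $1$-form on $\SSS^1$.

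There is no real obstacle here; the only point to be mindful of is that the two previous results are phrased in terms of symplectomorphisms of a symplectic surface, whereas the theorem is phrased in terms of orientation-preserving diffeomorphisms of an orientable surface, and one needs the trivial observation on one side and Lemma \ref{lemma:map-torus-difeo} on the other to bridge this gap. Since both steps cite results already proved, the proof reduces to two or three sentences.
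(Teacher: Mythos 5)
Your proposal is correct and matches the paper exactly: the paper states the theorem as an immediate consequence of Proposition \ref{prop:G22-str-rho0=0} and Lemma \ref{lemma:map-torus-difeo}, which is precisely the two-step assembly you describe. The bridging observations you make (a symplectic surface is orientable and a symplectomorphism is orientation-preserving in one direction; Lemma \ref{lemma:map-torus-difeo} in the other) are the right glue.
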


\begin{remark}
  From a $G_2^2$-structure with $\rho_0= 0$ on a $3$-manifold $M$ we
  define a symplectic structure on $M\times \SSS^1$ by $\rho_2 +
  \rho_1 \wedge d\theta$, where $d\theta$ denotes the usual $1$-form
  on $\SSS^1$ and we really mean the pullbacks of forms on $M$ and
  $\SSS^1$ to $M\times \SSS^1$. More generally, the condition that a
  $3$-manifold $M$ fibres over the circle is equivalent to the
  existence of a symplectic structure on $M\times S^1$, as addressed
  in \cite{friedl-vidussi:2011}.
\end{remark}

\begin{remark}
  After acting by a generalized diffeomorphism, a $G_2^2$-structure
  $\rho$ with $\rho_0=0$ can be written as $\rho_1 + \rho_2$. This is
  a co-symplectic structure on the $3$-manifold in the sense of
  \cite{libermann:1959}. In this context, statements similar to the
  ones in this section have been obtained in \cite{li-hongjun:2008}.
\end{remark}

\subsection{Deformation of $G_2^2$-structures}
\label{subsec:deformation-G22}

Inspired by the Moser argument for symplectic geometry, we study whether a
small perturbation of a $G_2^2$-structure (on a compact $3$-manifold $M$) within its cohomology class
may change the $G_2^2$-structure up to equivalence by
$$\GDiff_0(M) = \{ f\ltimes (B,A) \in \GDiff(M) \st f\in \Diff_0(M), B \textrm{ and } A \textrm{ are exact} \}.$$

Let $\rho^0, \rho^1\in \Omega^\bullet(M)$ be two
$G_2^2$-structures representing the same cohomology class,
$\rho^1-\rho^0=d\phi$, and sufficiently close to have that each form
$\rho^t=\rho^0 + t (\rho^1-\rho^0)$ is a $G_2^2$-structure, i.e., $\la
\rho^t,\rho^t \ra \neq 0$, for $0\leq t\leq 1$. We would like to have
a one-parameter family of generalized diffeomorphisms $\{ F_t \}$ such
that $F_t^* \rho^t=\rho^0$, making equivalent all the
$G_2^2$-structures between $\rho^0$ and $\rho^1$. We will be looking
for $\{F_t\}$ coming from a time-dependent generalized vector field
$\{X_t+\xi_t+\lambda_t\}$. By differentiating $F_t^* \rho^t=\rho^0$
and using Cartan's formula, we then have
$$ 0 = \frac{d}{dt} [F_t^* \rho^t] = F_t^* \left[ \frac{d\rho_t}{dt} + \mathbf{L}_{X_t+ \xi_t + \lambda_t} \rho^t \right] = F_t^*[ d\phi + d((X_t+\xi_t+\lambda_t)\cdot \rho^t) ]=0.$$
So, in order to find such generalized vector fields it will suffice to
solve the equation $d ((X_t+\xi_t+\lambda_t)\cdot \rho^t)) =
d(-\phi)$, or equivalently, to solve the equation
$(X_t+\xi_t+\lambda_t)\cdot \rho^t = -\phi$ where we are allowed to
modify $\phi$ by the addition of a closed form depending on $t$. This
latter equation corresponds to $\phi$ being in the image of the
Clifford product of the sections of the rank $7$ vector bundle $T\op
T^*\op 1$ by $\rho^t$. The spinor $\rho^t$ defines a map $T\op T^*\op
1 \to \bigwedge^\bullet T^*M$. Since $\rho^t$ is non-null, this map is
injective (the annihilator of a non-null spinor is trivial). From the
antisymmetry of the Clifford product with respect to the pairing, $\la
v_m\cdot \rho^t_m, \rho^t_m \ra_m = 0$, where $v_m$ and $\psi_m$ lie
over $m\in M$, and the image is $\{\rho^t\}^\perp=\{ \psi \in
\bigwedge^\bullet T^*M \st \la \rho^t,\psi \ra = 0 \}$. Thus, $\rho^t$
defines an isomorphism between the rank $7$ vector bundles $T\op
T^*\op 1$ and $\{\rho^t\}^\perp$. Consequently, for the equation
$(X_t+\xi_t+\lambda_t)\cdot \rho^t=-\phi$ to have a solution and then
apply the Moser argument, we must have $\phi\in
\cC^\infty(\{\rho^t\}^\perp)$.

\begin{proposition}\label{prop:Moser-G22-neq0}
  Any sufficiently small perturbation $\{\rho^t\}$ within the
  cohomology class of a $G_2^2$-structure $\rho^0$ such that
  $\rho^0_0\neq 0$ is equivalent to $\rho^0$ under the action of the group
  $\GDiff_0(M)$.
\end{proposition}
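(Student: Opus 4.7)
The plan is to carry out the Moser-type argument sketched in the discussion immediately preceding the statement. First, since $M$ is connected and $\rho^0,\rho^1$ are cohomologous, their degree~$0$ components (both constants) must coincide, so $\rho^t_0=\rho^0_0$ is a nonzero constant for every $t\in[0,1]$. By subtracting a closed form from $\phi$, which does not alter $d\phi=\rho^1-\rho^0$, one may also assume that $\phi$ has no degree~$3$ component, writing $\phi=\phi_0+\phi_1+\phi_2$ with $\phi_0$ a real constant.

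The crux of the argument is the pointwise perpendicularity condition: for each $t$ one needs a closed form $\tilde{\phi}_t$ such that $\phi-\tilde{\phi}_t$ lies in $\cC^\infty(\{\rho^t\}^\perp)$. Using the explicit pairing in Remark~\ref{rem:pairing-3-man}, the equation $\la\phi-\tilde{\phi}_t,\rho^t\ra=0$ can be solved algebraically by taking $\tilde{\phi}_t$ to be purely of top degree,
$$\tilde{\phi}_t=\frac{1}{\rho^t_0}\bigl(\phi_0\,\rho^t_3-\phi_1\wedge\rho^t_2-\rho^t_1\wedge\phi_2\bigr),$$
which is automatically closed on the $3$-manifold $M$. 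This is the step I expect to be the main obstacle, and it is precisely where the hypothesis $\rho^0_0\neq 0$ is essential: it collapses what would otherwise be a global analytic problem into a pointwise division by a nonzero constant, so no Hodge-theoretic input is needed.

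Once $\phi-\tilde{\phi}_t$ is known to lie smoothly in $\cC^\infty(\{\rho^t\}^\perp)$, the isomorphism $T\op T^*\op 1\cong\{\rho^t\}^\perp$ given by Clifford multiplication with the non-null spinor $\rho^t$ produces a unique time-dependent generalized vector field $X_t+\xi_t+\lambda_t$ with $(X_t+\xi_t+\lambda_t)\cdot\rho^t=-(\phi-\tilde{\phi}_t)$. The associated infinitesimal generalized diffeomorphism $(X_t,d\xi_t,d\lambda_t)\in\fgdiff(M)$ then integrates, via Remark~\ref{remark:integrating-time-dependent}, to a smooth family $F_t=f_t\ltimes(B_t,A_t)$ with $F_0=\id$, and the calculation preceding the statement gives $F_t^*\rho^t=\rho^0$.

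It remains to verify $F_1\in\GDiff_0(M)$. The diffeomorphism $f_1$ is the time-$1$ map of the flow of $X_t$ and so lies in $\Diff_0(M)$. Since $d$ commutes with pullback, the formula of Remark~\ref{remark:integrating-time-dependent} yields $A_t=\int_0^t f_s^*d\lambda_s\,ds=d\int_0^t f_s^*\lambda_s\,ds$, which is exact. Using $dA_s=0$, the summand $f_s^*d\lambda_s\wedge A_s=d(f_s^*\lambda_s\wedge A_s)$ in the corresponding formula for $B_t$ is exact under the integral sign, so $B_t$ is exact as well, which finishes the plan.
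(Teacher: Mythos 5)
Your proof is correct and follows essentially the same route as the paper: both exploit the fact that every $3$-form on a $3$-manifold is closed in order to adjust the degree-$3$ part of $\phi$ so that $\la \rho^t,\phi^t\ra=0$ (your explicit $\tilde{\phi}_t$ is exactly the paper's choice $\phi^t_3=-\frac{1}{\rho^0_0}\la\rho^t,\phi_0+\phi_1+\phi_2\ra$ expanded via Remark~\ref{rem:pairing-3-man}), and then invoke the isomorphism $T\op T^*\op 1\cong\{\rho^t\}^\perp$ together with the Moser integration. The only blemish is the incidental claim that $\phi_0$ is constant --- it need not be, since $d\phi_0=\rho^1_1-\rho^0_1$ --- but nothing in your argument uses this, and your closing verification that $F_1\in\GDiff_0(M)$ is a welcome detail the paper leaves implicit.
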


\begin{proof}
  We have that $\rho^t_0=\rho^0_0\neq 0$. Since we can add any closed
  form to $\phi$, we can arbitrarily modify its degree $3$ part. The
  Moser argument applies by setting $\phi^t_3 = - \frac{1}{\rho^0_0}
  \la \rho^t, \phi_o + \phi_1 + \phi_2 \ra$, so that we have $\la \rho^t,
  \phi^t \ra = 0$.
\end{proof}

When $\rho_0 =0$, the result remains true but involves some
technicalities.

\begin{lemma}\label{lemma:Moser-G22}
  Let $\rho$ be a $G_2^2$-structure with $\rho_0=0$ and $[\rho_1]\in
  H^1(M,\Q)$. There exists an operator $R:\Omega^\bullet(M)\to
  \Omega^\bullet_{cl}(M)$ such that $\phi+R\phi\in
  \cC^\infty(\{\rho\}^\perp)$.
\end{lemma}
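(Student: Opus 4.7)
The plan is to construct $R\phi$ directly so that $\la\rho,\phi+R\phi\ra$ vanishes as a $3$-form on $M$. Since $\rho$ is non-null, the Clifford pairing $\la\rho,\cdot\ra\colon\Omega^\bullet(M)\to\Omega^3(M)$ is pointwise surjective with kernel of rank $7$; the only nontrivial task is to realize the target $\omega:=-\la\rho,\phi\ra$ as $\la\rho,\psi\ra$ for some closed $\psi=R\phi$.

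First I would use that $M$ is a compact orientable $3$-manifold, so $H^3(M,\R)\cong\R$ is generated by the nowhere-vanishing volume form $\rho_1\wedge\rho_2$. Hodge theory with a fixed auxiliary metric provides a linear decomposition
$$\omega = c\,\rho_1\wedge\rho_2 + d\gamma,$$
where $c\in\R$ is the unique constant with $[\omega]=c\,[\rho_1\wedge\rho_2]$, and $\gamma\in\Omega^2(M)$ is chosen linearly in $\omega$ (for instance $\gamma=d^*G(\omega-c\,\rho_1\wedge\rho_2)$ with $G$ the Green's operator).

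Next I would exploit that $\rho_1\wedge\rho_2$ being a volume form yields a unique vector field $X$ with $i_X\rho_1=1$ and $i_X\rho_2=0$. The splitting $T^*M=\R\rho_1\oplus\ker(i_X)$ extends to $\Lambda^2T^*M=\rho_1\wedge\ker(i_X)\oplus\R\rho_2$, giving a pointwise linear decomposition
$$\gamma = \rho_1\wedge\alpha + h\,\rho_2,$$
where $\alpha:=i_X\gamma$ satisfies $i_X\alpha=0$ and $h=(\rho_1\wedge\gamma)/(\rho_1\wedge\rho_2)\in\cC^\infty(M)$. I then set
$$R\phi := -c\,\rho_2 + d\alpha - dh,$$
which is closed because $d\rho_2=0$ and the other summands are exact.

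The verification is a short calculation from Remark \ref{rem:pairing-3-man} together with $d\rho_1=d\rho_2=0$: the three summands of $R\phi$ contribute $c\,\rho_1\wedge\rho_2$, $-\rho_1\wedge d\alpha = d(\rho_1\wedge\alpha)$, and $dh\wedge\rho_2 = d(h\,\rho_2)$ respectively to $\la\rho,R\phi\ra$, whose total is $c\,\rho_1\wedge\rho_2+d\gamma=\omega$. The main obstacle is the first step: performing the Hodge-type split of $\omega$ in a way that makes $R$ linear in $\phi$. Interestingly, the rationality hypothesis $[\rho_1]\in H^1(M,\Q)$ does not appear explicitly in the construction; I expect its role to be in the surrounding Moser-type argument, where the resulting family of operators must behave well with respect to the fibration $\pi\colon M\to \SSS^1$.
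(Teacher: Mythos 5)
Your construction is correct, and it takes a genuinely different route from the paper. The paper uses the hypothesis $[\rho_1]\in H^1(M,\Q)$ to produce a fibration $\pi\colon M\to \SSS^1$, then integrates the primitive $\alpha$ of $\la\rho,\phi+c\rho_2\ra$ over the fibres and applies \emph{fibrewise} Hodge theory to write $\alpha = f\rho_2 + \gamma\wedge\rho_1 + d\beta$ with $f$ pulled back from the circle; the closed correction is then read off from $d\alpha = df\wedge\rho_2 + \rho_1\wedge d\gamma$. You replace this global, fibration-dependent step by the \emph{pointwise} algebraic splitting $\Lambda^2 T^* = \rho_1\wedge\Lambda^1 \oplus \R\,\rho_2$ induced by the kernel vector field $X$ of $\rho_2$ normalized by $i_X\rho_1=1$, writing the primitive exactly as $\gamma = \rho_1\wedge\alpha + h\,\rho_2$ with no exact remainder and with $h$ an arbitrary smooth function rather than one constant on fibres. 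Since $d\rho_2=0$, the identity $d\gamma = d(\rho_1\wedge\alpha)+d(h\rho_2)$ still yields closed degree~$1$ and~$2$ corrections, and your sign bookkeeping against Remark \ref{rem:pairing-3-man} (with $\rho_0=0$, so $\la\rho,\psi\ra = \psi_0\rho_3 - \rho_1\wedge\psi_2 - \psi_1\wedge\rho_2$) checks out; linearity and smoothness of $c$, $\gamma$, $\alpha=i_X\gamma$ and $h=(\rho_1\wedge\gamma)/(\rho_1\wedge\rho_2)$ are clear. What your approach buys is significant: as you observe, the rationality hypothesis is never used, so your argument proves the lemma for \emph{every} $G_2^2$-structure with $\rho_0=0$. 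If that is right — and I see no gap — it would also remove the need for the perturbation-series and density argument in the second half of Proposition \ref{prop:Moser-G22-0}, since the operators $R_t$ and $Q_t$ could be built directly for each $\rho^t$. The one point worth stating explicitly is that $X$ is smooth (it is the kernel line field of the constant-rank map $\rho_2\colon T\to T^*$, on which $\rho_1$ is nonvanishing because $\rho_1\wedge\rho_2$ is a volume form) and that $M$ is connected so that $H^3(M,\R)\cong\R$.
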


\begin{proof}
  By considering a multiple of $\rho$ we can consider $[\rho_1] \in
  H^1(M,\Z)$. By Proposition \ref{prop:G22-str-rho0=0}, $M$ fibres
  over the circle with fibre $S$. First, define the constant $c=[\la
  \rho,\phi\ra] / [\rho_1\wedge \rho_2]$. Add the closed form
  $c\rho_2$ to $\phi$; then the cohomology class of $\la \rho,
  \phi+c\rho_2\ra$ is trivial. Thus, $\la \rho, \phi+c\rho_2\ra =
  d\alpha$ for some $2$-form $\alpha$. Choose a metric on $M$. Using
  the Hodge decomposition, the codifferential $d^*$ and the Green
  operator $G$, we may take $\alpha=d^*G\la \rho,\phi'\ra$. Integrate
  $\alpha$ over the fibres to get a function $g$ on the circle. Since
  $\rho_1\wedge\rho_2\neq 0$, the fibres are homologous and $\rho_2$
  is closed, then $\int_S \rho_2=c'\neq 0$ for any fibre $S$. Let
  $f=g/c'$. The $2$-form $\alpha_0=\alpha - f \rho_2$ has zero
  integral along the fibres. The metric on $M$ induces a metric on any
  fibre $S$, for which we define the codifferential $d^*_{S}$,
  harmonic operator $H_{S}$ and Green operator $G_{S}$ such
  that $$\alpha_{0|S}=H_{S}\alpha_{0|S}+d_{S}(d_{S}^* G_{S}
  \alpha_{0|S}) + d^*_{S}(d_{S} G_{S} \alpha_{0|S}).$$ For degree
  reasons, $d_{S} G_{S} \alpha_{0|S}=0$, and from $\int_{S}
  \alpha_{0|S}=0$, $H_{S}\alpha_{0|S}=0$. We then have, over each
  fibre $S$, $\alpha_{0|S}=d_{S}\beta$ where $\beta=d_{S}^* G_{S}
  \alpha_{0|S}$. Since the metric on $M$ determines a smoothly varying
  family of metrics over the fibres, we have a globally smooth
  $1$-form $\beta$ such that $\alpha_0-d\beta$ is zero restricted to a
  fibre.

  Let $X$ be the vector field transversal to the fibration such that
  $i_X \rho_1=1$, and let $\gamma= - i_X(\alpha_0 - d\beta)$. We have
  that $\alpha_0 - d\beta = \gamma \wedge \rho_1$. By differentiating
  this expression we get $$d\alpha = d(\alpha_0 + f\rho_2) = df \wedge
  \rho_2 + \rho_1 \wedge d\gamma.$$

  Define ${\rm R}\phi=c\rho_2 + df + d\gamma\in
  \Omega^\bullet_{cl}(M)$. Since $c$, $f$ and $\gamma$ have been
  uniquely defined, ${\rm R}$ defines an operator on differential
  forms. We have by construction that $\la \rho, \phi + {\rm R}\phi
  \ra = 0$, i.e., $\phi + {\rm R}\phi\in
  \cC^\infty(\{\rho\}^\perp)$.\end{proof}

Let ${\rm Q}\phi \in \cC^\infty(T\op T^*\op 1)$ be the unique
generalized vector field such that ${\rm Q}\phi \cdot \rho = - (\phi +
{\rm R}\phi)$. Thus ${\rm Q}$ defines an operator
$\Omega^\bullet(M)\to\cC^\infty(T\op T^*\op 1)$.

  \begin{proposition}\label{prop:Moser-G22-0}
    Any sufficiently small perturbation $\{\rho^t\}$ within the
    cohomology class of a $G_2^2$-structure $\rho^0$ such that
    $\rho^0_0=0$ is equivalent to $\rho^0$ by $\GDiff_0(M)$.
  \end{proposition}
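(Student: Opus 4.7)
The plan is to run the Moser-type argument described before Proposition~\ref{prop:Moser-G22-neq0}, this time using the operators $R$ and $Q$ from Lemma~\ref{lemma:Moser-G22} together with Remark~\ref{remark:integrating-time-dependent} to handle the case $\rho^0_0=0$. Form the straight-line family $\rho^t=\rho^0+t\,d\phi$, where $d\phi=\rho^1-\rho^0$. Since $\rho^t_0$ is a closed $0$-form cohomologous to the constant $0$ on the connected compact $M$, one has $\rho^t_0=0$ for all $t$, and after shrinking $\phi$ each $\rho^t$ is a $G_2^2$-structure. As $[\rho^t_1]=[\rho^0_1]$ is constant in $t$, after an initial rescaling I assume this class lies in $H^1(M,\Q)$, so that Lemma~\ref{lemma:Moser-G22} applies uniformly along the deformation.

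Using the lemma at each time, define $R_t\phi$ (closed, with $\phi+R_t\phi\in\cC^\infty(\{\rho^t\}^\perp)$) and the generalized vector field $v_t:=Q_t\phi$ satisfying $v_t\cdot\rho^t=-(\phi+R_t\phi)$. Write $v_t=X_t+\xi_t+\lambda_t$ and identify it with the time-dependent infinitesimal generalized diffeomorphism $(X_t,d\xi_t,d\lambda_t)$ as in Remark~\ref{rem:Dorfmann-product}; then integrate this family to $F_t=f_t\ltimes(B_t,A_t)$ with $F_0=\id$ via Remark~\ref{remark:integrating-time-dependent}. Because $\rho^t$ is closed, the Cartan formula for the spinor Lie derivative reduces the Moser computation to
\[
\frac{d}{dt}(F_t^*\rho^t)=F_t^*\bigl(d\phi+d(v_t\cdot\rho^t)\bigr)=-F_t^*\,d(R_t\phi)=0,
\]
so $F_1^*\rho^1=\rho^0$. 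Equivalence in $\GDiff_0(M)$ then follows because $f_1$ is the time-$1$ flow of the vector field $X_t$ (hence lies in $\Diff_0(M)$), and the forms $B_1$, $A_1$ produced by the integration procedure of Remark~\ref{remark:integrating-time-dependent} are exact, as they are built out of the exact infinitesimal data $b_t=d\xi_t$ and $a_t=d\lambda_t$.

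The main obstacle is smoothness of $R_t$ in $t$, which is needed for $v_t$ to be a smooth time-dependent section and for the ODE of Remark~\ref{remark:integrating-time-dependent} to integrate on all of $[0,1]$. This requires checking that each ingredient of the construction in Lemma~\ref{lemma:Moser-G22} depends smoothly on $\rho^t$: the constant $c_t=[\la\rho^t,\phi\ra]/[\rho^t_1\wedge\rho^t_2]$, the global Hodge-theoretic primitive $\alpha_t=d^*G\la\rho^t,\phi+c_t\rho^t_2\ra$, the fibrewise decomposition yielding $\beta_t$ (where even the fibration itself moves with $\rho^t_1$), and the transversal vector field determined by $i_{X_t}\rho^t_1=1$, $i_{X_t}\rho^t_2=0$. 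A subsidiary technical point is removing the rationality assumption on $[\rho^0_1]$, which I would handle either by a density argument in $H^1(M,\R)$ or by first normalizing $\rho^0$ via a generalized diffeomorphism before running the Moser deformation.
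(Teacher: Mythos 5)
Your argument reproduces the paper's treatment of the special case $[\rho^0_1]\in H^1(M,\Q)$: apply Lemma~\ref{lemma:Moser-G22} at each time $t$ to get $\phi^t=\phi+R_t\phi\in\cC^\infty(\{\rho^t\}^\perp)$, solve $v_t\cdot\rho^t=-\phi^t$, and integrate via Remark~\ref{remark:integrating-time-dependent}; your observations that $\rho^t_0\equiv 0$, that $[\rho^t_1]$ is constant in $t$, and that the resulting $B_1,A_1$ are exact are all correct. The gap is the general case. You write that ``after an initial rescaling'' you may assume $[\rho^0_1]\in H^1(M,\Q)$, but rescaling by a constant only moves the class along the ray $\R_{>0}\,[\rho^0_1]$, and a generic class in $H^1(M,\R)$ (whenever $b_1(M)\geq 2$) has no rational multiple. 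Nor can rationality be arranged by ``first normalizing $\rho^0$ via a generalized diffeomorphism'': generalized diffeomorphisms act on $H^1(M,\R)$ through the mapping class group, which preserves the integral lattice and hence cannot carry an irrational class to a rational one, and elements of $\GDiff_0(M)$ act trivially on cohomology in any case. This matters because the operator $R$ of Lemma~\ref{lemma:Moser-G22} is built from the fibration $M\to \SSS^1$ defined by an integral multiple of $\rho_1$ (fibrewise Hodge theory, integration over the fibres); it simply does not exist when $[\rho_1]$ is irrational.

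Your fallback, ``a density argument in $H^1(M,\R)$'', is indeed the paper's route, but it is not a routine limiting step: one must show that solvability of $u\cdot\rho=-\phi$ modulo closed forms is an \emph{open} condition around each rational structure, with quantitative control of the neighbourhood. The paper does this by writing a nearby structure as $\rho+\lambda\beta$, setting up the formal series $(u_0+\lambda u_1+\cdots)\cdot(\rho+\lambda\beta)=-(\phi+{\rm R}\phi+\lambda\gamma_1+\cdots)$, solving iteratively with $u_j={\rm Q}{\rm P}^j\phi$ and $\gamma_j={\rm R}{\rm P}^j\phi$ where ${\rm P}\phi={\rm Q}\phi\cdot\beta$, proving convergence of $\sum_j\lambda^j{\rm P}^j\phi$ in Sobolev norms for $\lambda<1/(2C_s)$ using boundedness of ${\rm Q}$ (which comes from the Green operators and fibre integration), and then upgrading the Sobolev solution to a smooth one. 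None of this appears in your proposal, so the irrational case --- which is where most of the work in this proposition lies --- is missing. (Your separate concern about smoothness of $R_t$ in $t$ is legitimate, though the paper also leaves it implicit.)
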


  \begin{proof}

    When $[\rho^0_1]\in H^1(M,\Q)$, we use Lemma \ref{lemma:Moser-G22}
    to produce an operator ${\rm R}_t$ for each $\rho^t$ and we define
    $\phi^t=\phi+{\rm R}_t \phi$, so that $\la \rho^t, \phi^t\ra=0$
    and the Moser argument applies.

    For the general case, we prove an analogous result in a
    neighbourhood of a $G_2^2$-structure with rational degree $1$ part
    and use a density argument. We drop the superindex $t$ for the
    sake of brevity. Consider $\rho + \lambda \beta$, with $\lambda>0$
    and $\beta\in\Omega^\bullet_{cl}(M)$ such that $\beta_0=0$. We
    want to solve the equation $u\cdot (\rho+\lambda \beta) = -\phi$
    up to addition of closed forms. To do that, consider
    \begin{equation}\label{eq1}
      (u_0+\lambda u_1 + \lambda^2 u_2 + \ldots )\cdot (\rho+\lambda\beta) = -(\phi + {\rm R}\phi + \lambda \gamma_1 + \lambda^2 \gamma_2 + \ldots),\tag{$\star$}
    \end{equation}
    for closed forms $\gamma_i$. We solve it iteratively, starting
    with $u_0\cdot \rho = -\phi + {\rm R}\phi$, which has solution
    $u_0={\rm Q}\phi$. We then have $u_1\cdot \rho = -({\rm
      Q}\phi\cdot \beta + \gamma_1)$. We define the operator ${\rm
      P}:\Omega^\bullet(M)\to \Omega^\bullet(M)$ by ${\rm P}\phi={\rm
      Q}\phi\cdot \beta$ and consider $\gamma_1 = {\rm RP}\phi$. The
    equation becomes $u_1\cdot \rho = -({\rm P}\phi+{\rm RP}\phi)$,
    whose solution is $u_1={\rm QP}\phi$. For $j\geq 2$ we have
    $u_j\cdot \rho = - u_{j-1} \cdot \beta + \gamma_j = - {\rm
      P}^{j}\phi + \gamma_j$. By taking $\gamma_j={\rm RP}^j\phi$, the
    solution is given by $u_j={\rm QP}^j\phi$. We thus obtain a formal
    solution of \eqref{eq1} by
$${\rm Q}(\phi+\lambda {\rm P}\phi+\lambda^2{\rm P}^2\phi+\ldots)\cdot (\rho+\lambda\beta)=-\phi+{\rm R}(\phi+\lambda {\rm P}\phi +\lambda^2 {\rm P}^2\phi +\ldots).$$
To see the convergence of the series $\phi+\sum_{j=1}^\infty \lambda^j
{\rm P}^j\phi$ for $\lambda$ sufficiently small, we consider Sobolev
spaces ${\rm H}_s(T\op T^*\op 1)$ and ${\rm
  H}_s(\bigwedge^\bullet(M))$ with norms $||\;||_s$. Since the
operator ${\rm Q}$ is defined in terms of the Green operator and
integration over the fibres, it is bounded, and so is the operator
${\rm P}$. For $s$ sufficiently large and any $\beta$ such that
$||v\cdot \beta||_s\leq ||v||_s$, there exists some constant $C_s$
such that $||{\rm P}\phi||_s\leq C_s ||\phi||_s$.

Take $\lambda$ such that $0<\lambda<\frac{1}{2C_s}$. Then,
$\phi+\sum_{j=1}^\infty \lambda^j {\rm P}^j\phi$ is a Cauchy sequence
and converges to a form $\Phi\in {\rm
  H}_s(\bigwedge^\bullet(M))$. Equation \eqref{eq1} becomes $u\cdot
(\rho+\lambda\beta)=-(\phi+{\rm R}\Phi)$ and a solution is given by
${\rm Q}\Phi \in {\rm H}_s(T\op T^*\op 1)$.

We have that for any $\rho$ such that $[\rho_1]\in H^1(M,\Q)$, there
exists a neighbourhood for which there is a solution in ${\rm H}_s(T\op
T^*\op 1)$. Since $\phi\in\Omega^\bullet(M)$ belongs to ${\rm
  H}_s(\bigwedge^\bullet(M))$ for any $s$, we have that the solution
belongs to ${\rm H}_s$ for any $s$. Thus, the series defines $\Phi\in
\cC^\infty(\bigwedge^\bullet(M))$, we have that ${\rm Q}\Phi\in \cC^\infty(T\op
T^*\op 1)$ is a solution of $u\cdot \rho^t = -\phi$ up to closed
forms, and the Moser argument applies. Since there exists a solution
in an open neighbourhood of any rational form, by density of the
rational forms, there exists a solution for any closed form $\rho$ and
the Moser argument applies.
\end{proof}

We summarize Propositions \ref{prop:Moser-G22-neq0} and
\ref{prop:Moser-G22-0} in the following theorem.

\begin{theorem}\label{theo:small-perturbation}
  Any sufficiently small perturbation $\{\rho^t\}$ within the
  cohomology class of a $G_2^2$-structure $\rho^0$ is equivalent to
  $\rho^0$ by $\GDiff_0(M)$.
\end{theorem}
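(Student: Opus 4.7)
The plan is to deduce the theorem directly from Propositions \ref{prop:Moser-G22-neq0} and \ref{prop:Moser-G22-0} by splitting on the value of the constant $\rho^0_0$. Since the path $\rho^t = \rho^0 + t\,d\phi$ lies entirely in the cohomology class of $\rho^0$ and the exact term $d\phi$ has no degree $0$ component, one has $\rho^t_0 = \rho^0_0$ for every $t$, so the dichotomy $\rho^0_0 \neq 0$ versus $\rho^0_0 = 0$ is preserved along the whole deformation and the two regimes may be treated independently.

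In the first regime, Proposition \ref{prop:Moser-G22-neq0} applies verbatim and produces a path in $\GDiff_0(M)$ intertwining $\rho^0$ and $\rho^t$. In the second, Proposition \ref{prop:Moser-G22-0} does the same. Since these two regimes exhaust all $G_2^2$-structures on $M$, the theorem follows.

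For context, both propositions rest on the common Moser strategy set up in Section \ref{subsec:deformation-G22}: one looks for a one-parameter family $\{F_t\} \subset \GDiff_0(M)$ with $F_t^*\rho^t = \rho^0$ coming from time-dependent infinitesimal generalized diffeomorphisms, differentiates using the Cartan formula for the spinorial Lie derivative, and reduces the problem to solving $(X_t + \xi_t + \lambda_t)\cdot \rho^t = -\phi$ modulo closed forms. The non-nullity of $\rho^t$ makes Clifford multiplication by $\rho^t$ an isomorphism from $T\op T^*\op 1$ onto $\{\rho^t\}^\perp$, so the only real content is to modify $\phi$ by a closed form so that $\la \rho^t, \phi\ra = 0$. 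The expected main obstacle — the genuinely nontrivial step, already handled in Proposition \ref{prop:Moser-G22-0} — is the $\rho^0_0 = 0$ case, where the modification is not algebraic: one needs the operator $\mathrm{R}$ built from the mapping torus fibration of Proposition \ref{prop:G22-str-rho0=0} together with Hodge theory along the fibres, followed by a Sobolev iteration and a density argument to treat non-rational classes $[\rho^0_1]$. Once the two propositions are in hand, however, assembling them into the theorem is immediate.
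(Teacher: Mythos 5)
Your proposal is correct and matches the paper exactly: the paper states the theorem as a direct summary of Propositions \ref{prop:Moser-G22-neq0} and \ref{prop:Moser-G22-0}, split according to whether $\rho^0_0\neq 0$ or $\rho^0_0=0$, just as you do. Your observation that $\rho^t_0=\rho^0_0$ along the whole path (so the dichotomy is preserved) is the only glue needed, and it is sound since the exact perturbation $d\phi$ has no degree $0$ component.
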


\newpage

\subsection{The cone of $G_2^2$-structures}
\label{subsec:cone-G22}

Inspired by the cones of K\"ahler and symplectic structures inside the
\mbox{second} cohomology group of a manifold, we raise a similar
question for $G_2^2$-structures on compact $3$-manifolds. What are the cohomology classes
$[\rho]\in H^\bullet(M,\R)$ which have a representative in
$\Omega^\bullet(M,\R)$ defining a $G_2^2$-structure compatible with
the orientation of $M$? From the homogeneity of the condition $\la
\rho, \rho \ra> 0$, it is clear that these elements form an open cone
in $H^\bullet(M,\R)$.

Consider a mixed degree cohomology class $[\rho]\in H^\bullet(M,\R)$
satisfying $[\rho_o][\rho_3]-[\rho_1][\rho_2]> 0 \in
H^\bullet(M,\R)$. In the case that $[\rho_0]\neq 0$, i.e., $\rho_0\neq
0$, consider a non-vanishing form $\omega$ representing the degree $3$
class $[\rho_o \rho_3 - \rho_1 \wedge \rho_2]$. Define
$\rho'=\rho_0+\rho_1+\rho_2 + \frac{1}{\rho_0}(\omega + \rho_1 \wedge
\rho_2)$, which satisfies $\la \rho',\rho'\ra=2\omega$ and is thus a
$G_2^2$-structure representing $[\rho]$.

On the other hand, for a class $[\rho]$ with $[\rho_0]=0$, i.e.,
$\rho_0= 0$, the condition $[\la \rho,\rho\ra]=-2[\rho_1][\rho_2]> 0$
must be satisfied. Moreover, $[\rho_1]$ and $[\rho_2]$ must be
represented by non-vanishing forms.  From Theorem 5 in
\cite{thurston-memoirs:1986}, the set of cohomology classes $C_1$ in
$H^1(M,\R)$ which can be represented by a non-singular closed $1$-form
constitutes an open set described as follows. Define the norm $X$ for $\omega
\in H^2(M,\R)$ as the infimum of the negative parts of the Euler
characteristics of embedded surfaces defining $\omega$, and extend
this definition to $H^1(M,\R)$ using Poincar\'e duality. Namely, the
norm of a $1$-form $\phi$ in $M$
is $$||\phi||_X=\textrm{min}\{\chi_-(S)\st S\subset M \textrm{
  properly embedded surface dual to } \phi \},$$ where
$\chi_-(S)=\textrm{max}\{-\chi(S),0\}$. The unit ball for this norm is
a polytope called the Thurston ball $B_X$. The set of $1$-cohomology
classes $C_1$ represented by non-vanishing $1$-forms consists of the
union of the cones on some open faces, so-called fibred faces, of the
Thurston ball, minus the origin.

For each element $\al=[a]\in C_1$, given by a non-singular $a$, take
$h\in H^2(M,\R)$ such that $h\cup \al>0$. Lemma 2.2 in
\cite{friedl-vidussi:2012} ensures that we can always find a
representative $\Omega$ of the class $h$, such that $\Omega\wedge
a>0$. Hence, if we define $$C=\{ (\al,\be)\in C_1\op H^2(M,R) \st
\al\cup \be< 0 \},$$ we have that the cone of $G_2^2$-structures with
$\rho_0=0$ in $H^\bullet(M,\R)$ is given by $C \op H^3(M,\R).$ To sum
up, we have the following theorem.

\begin{theorem}\label{theo:G22-cone}
  The cone of $G_2^2$-structures, or $G_2^2$-cone, is given by
$$ \{ [\rho]\in H^\bullet(M,\R) \st [\rho_0]\neq 0 \textrm{ and } [\rho_0][\rho_3] - [\rho_1][\rho_2]>0 \} \bigcup \left(C\op H^3(M,\R) \right).$$
\end{theorem}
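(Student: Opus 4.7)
The plan is to prove two inclusions, splitting along the dichotomy $[\rho_0]\neq 0$ versus $[\rho_0]=0$. Openness of the $G_2^2$-cone is immediate from the openness of the condition $\la \rho,\rho\ra>0$ together with its homogeneity, which is already noted in the preceding discussion.

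For the necessity direction I would integrate the pointwise inequality $\la \rho,\rho\ra = 2(\rho_0\rho_3 - \rho_1\wedge\rho_2)>0$ over $M$. Since the closed $0$-form $\rho_0$ is a constant, this yields $[\rho_0][\rho_3] - [\rho_1][\rho_2] > 0$ at the level of $H^3(M,\R)$. When $[\rho_0]=0$, the nowhere-vanishing of $\rho_1\wedge\rho_2$ forces $\rho_1$ to be a non-singular closed $1$-form, so $[\rho_1]\in C_1$ by the definition of $C_1$, while $[\rho_1]\cup[\rho_2]=-\tfrac{1}{2}[\la\rho,\rho\ra]<0$, placing the class in $C\oplus H^3(M,\R)$.

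For sufficiency in the case $[\rho_0]\neq 0$, I would use the construction laid out just before the statement: choose a non-vanishing $3$-form $\omega$ representing $[\rho_0\rho_3 - \rho_1\wedge\rho_2]$ (available because this class has positive integral) and form $\rho' = \rho_0 + \rho_1 + \rho_2 + \tfrac{1}{\rho_0}(\omega + \rho_1\wedge\rho_2)$, whose Chevalley square is $2\omega$, so $\rho'$ is a $G_2^2$-structure in $[\rho]$. The case $[\rho_0]=0$ is the substantial one. Given $(\alpha,\beta)\in C$, I would first invoke Thurston's theorem to obtain a non-vanishing closed $1$-form $\rho_1$ representing $\alpha$, which is exactly the content of $\alpha\in C_1$. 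Then, since $\alpha\cup(-\beta)>0$, Lemma 2.2 of Friedl--Vidussi provides a representative $\Omega$ of $-\beta$ such that $\rho_1\wedge\Omega$ is a positive volume form; setting $\rho_2=-\Omega$ and picking any closed representative of the prescribed $H^3$-class gives a $G_2^2$-structure in $[\rho]$ with $\la\rho,\rho\ra=-2\rho_1\wedge\rho_2>0$.

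The main obstacle is the $[\rho_0]=0$ case, where three cohomology classes must be simultaneously realised by closed forms whose pointwise product $\rho_1\wedge\rho_2$ is a nowhere-zero volume form of the correct sign. The delicate description of $C_1$ as the union of cones over fibred faces of the Thurston ball is precisely what ensures the first representative exists, and the Friedl--Vidussi lemma is what allows $\rho_2$ to be chosen coherently with $\rho_1$; without both ingredients one could only access the coarser open set $\{[\rho_1]\neq 0,\ [\rho_1]\cup[\rho_2]<0\}$, which overshoots the true cone.
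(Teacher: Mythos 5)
Your proposal is correct and follows essentially the same route as the paper: openness from homogeneity of $\la\rho,\rho\ra>0$, the explicit representative $\rho'=\rho_0+\rho_1+\rho_2+\tfrac{1}{\rho_0}(\omega+\rho_1\wedge\rho_2)$ when $[\rho_0]\neq 0$, and Thurston's fibred-face description of $C_1$ combined with Lemma 2.2 of Friedl--Vidussi to realise a compatible pair $(\rho_1,\rho_2)$ when $[\rho_0]=0$. Your explicit integration argument for the necessity direction and the sign bookkeeping for $\rho_2=-\Omega$ are details the paper leaves implicit, but the substance is identical.
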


\newpage

\bibliographystyle{alpha}
\bibliography{GenGeo}

\end{document}